\newtheorem{theorem}{Theorem}[section]
\newtheorem{corollary}[theorem]{Corollary}
\newtheorem{lemma}[theorem]{Lemma}
\newtheorem{proposition}[theorem]{Proposition}
\newtheorem{observation}[theorem]{Observation}
\theoremstyle{definition}
\newtheorem{example}[theorem]{Example}
\newtheorem*{mainthm}{Main Theorem}
\numberwithin{equation}{section}
\newcommand{\free}{\mathrm{F}}
\newcommand{\wor}{\mathrm{W}}
\begin{document}


\baselineskip=17pt


\title{On groups with weak Sierpi\'nski subsets}

\author{Agnieszka Bier,
Yves de Cornulier and Piotr S{\l}anina}

\date{March 17, 2019}

\maketitle







\begin{abstract}
In a group $G$, a weak Sierpi\'nski subset is a subset $E$ such that for some $g,h\in G$ and $a\neq b\in E$, we have $gE=E\smallsetminus \{a\}$ and $hE=E\smallsetminus \{b\}$. In this setting, we study the subgroup generated by $g$ and $h$, and show that has a special presentation, namely of the form $G_k=\langle g,h\mid (h^{-1}g)^k\rangle$ unless it is free over $(g,h)$. In addition, in such groups $G_k$, we characterize all weak Sierpi\'nski subsets.
\end{abstract}

\section{Introduction}
For a group $G$ and $g, h\in G$ we say that a subset $E\subseteq G$ is a $(g,h)$-wS-subset (a \emph{weak Sierpi\'nski} subset with respect to $g$ and $h$), if there exist elements $a\neq  b \in E$ such that $gE=E\smallsetminus \{a\}$ and $hE=E\smallsetminus \{b\}$. We will also use the tuple notation $(E,g,h,a,b)$ whenever the elements $a$ and $b$, called removable points, should be indicated directly. A subset $E$ is called a $wS$-subset, if it is a $(g,h)$-wS-subset for some $g,h\in G$.

Our first result is that the subgroup generated by $(g,h)$ has a very special form:

\begin{mainthm}
Let $G$ be a group with a $(g,h)$-wS-subset. Then the subgroup $H=\langle g,h\rangle$ is either free over $(g,h)$, or there exists $k\ge 2$ such that it has the presentation $H=G_k=\langle g,h\mid (h^{-1}g)^k\rangle$.
\end{mainthm}

The group $G_k$ is the free product of its infinite cyclic subgroup $\langle g\rangle$ and its cyclic subgroup of order $k$ $\langle h^{-1}g\rangle$, and therefore has a well-understood structure. For instance, by Kurosh's theorem \cite{kurosh} (see also \cite{LS}), its torsion elements are conjugated to elements of the finite cyclic subgroup $\langle h^{-1}g\rangle$.
It also follows, in all cases, that $(g,h^{-1}gh)$ is a free family in $H$, and in turn this yields:

\begin{corollary}\label{cor}
Let $G$ be a group. If $G$ contains a wS-subset then $G$ contains a free subgroup of rank two.
\end{corollary}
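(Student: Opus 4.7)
The plan is to combine the Main Theorem with the observation, stated just before the corollary, that $(g, h^{-1}gh)$ is a free family in $H = \langle g,h\rangle$; the subgroup generated by this pair is then a free subgroup of $G$ of rank two. The Main Theorem splits into two cases. In the free case, $H$ itself is a free group of rank two, and the conclusion is immediate. The substantive case is the second one, where $H$ is isomorphic to $G_k = \langle g,h \mid (h^{-1}g)^k\rangle$ for some $k \geq 2$.

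In this case I set $c = h^{-1}g$, so that $h = gc^{-1}$, $h^{-1} = cg^{-1}$, and therefore $h^{-1}gh = cgc^{-1}$. Using the identification $G_k = \langle g\rangle * \langle c\rangle$ recalled right after the Main Theorem (with $\langle g\rangle$ infinite cyclic and $\langle c\rangle$ cyclic of order $k$), the claim reduces to showing that $g$ and $y := cgc^{-1}$ freely generate a free subgroup inside this free product.

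This is a routine application of the normal form theorem for free products. A non-trivial reduced word of the form $g^{n_1} y^{m_1} g^{n_2} y^{m_2}\cdots$ in $g$ and $y$ expands to the alternating word
\[
g^{n_1}\, c\, g^{m_1}\, c^{-1}\, g^{n_2}\, c\, g^{m_2}\, c^{-1}\cdots
\]
in $\langle g\rangle * \langle c\rangle$. The $g$-syllables are non-trivial by reducedness of the original word, and the $c^{\pm 1}$-syllables are non-trivial because $c$ has order exactly $k \geq 2$. Hence the expansion is itself a reduced alternating word in the free product, and so represents a non-trivial element of $G_k$. The variants in which the original word starts or ends with a power of $y$ are handled identically. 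This gives the required free subgroup.

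I do not foresee any serious obstacle: the Main Theorem has already pinned down the structure of $H$, and all that remains is the explicit free-product calculation above. The one point requiring care is verifying that the $c^{\pm 1}$-syllables appearing in the expansion are genuinely non-trivial, which is precisely where the hypothesis $k \geq 2$ is used.
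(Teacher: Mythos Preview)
Your proposal is correct and follows exactly the route the paper indicates: the paper states (just before the corollary) that ``in all cases, $(g,h^{-1}gh)$ is a free family in $H$'' and deduces the corollary from this; you supply the explicit normal-form verification in the free product $\langle g\rangle * \langle c\rangle$ that the paper leaves implicit. The paper does also give a second, independent proof in Section~\ref{shorthm} via the number of ends and Stallings' theorem, but your argument matches the primary one.
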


The origins of the problem of existence of wS-subsets in groups go back to works of Sierpi\'nski, who considered subsets with so-called removable points in Euclidean $n$-dimensional space $\mathbb{R}^n$ \cite{Sier}.
In the following studies of Sierpi\'nski \cite{Sier}, \cite{Siermon}, Mycielski \cite{Myc} and Straus \cite{Straus}, \cite{Straus2} three kinds of subsets of $\mathbb{R}^n$ appeared to be of special interest: subsets with at most one removable point, subsets with two distinct removable points (weak Sierpi\'nski subsets) and subsets in which  all points are removable (Sierpi\'nski subsets). The ideas developed in this context were transferred by Straus to subsets of a group acting on itself  by left translations. In \cite{Straus} and \cite{Straus2} he proved that a group contains a Sierpi\'nski subset if and only if it contains a free subgroup of rank two.
Whether the same can be said about groups with weak Sierpi\'nski subsets became then an interesting question and as such it  was posed by  Mycielski and Tomkowicz in \cite{MycTom} and  \cite{TomWagbook} (Question 7.22).
Corollary \ref{cor}  gives the affirmative answer to this question. We remark however, that this statement  can directly be deduced from the Stallings splitting theorem (see Section \ref{shorthm} for details)

The paper is organized in four sections.
 Section 2 contains some basic definitions and preliminary observations. In Section 3 we study relations in a non-free group $G$  containing a weak Sierpi\'nski subset. We prove the main theorem and, as  corollaries, we find a complete description of wS-subsets in the respective subgroups $G_k$ and characterize the non-free subgroups of $G_k$.
In the last section we provide a shorter proof of Corollary \ref{cor} using the concept of ends of groups.

\section{Notation and preliminaries}

We begin with some simple observations on properties of groups with wS-subsets.

In what follows, $\wor(x,y)$ denotes the free monoid on the four symbols $x^{\pm 1}$, $y^{\pm 1}$; its elements are called words, and typically represent elements of the free group $\free_2(x,y)$ on $(x,y)$. The length $|w|$ of a word $w$ is the number of its constituting letters, for instance the length of $xyy^{-1}$ is 3.

For a group $G$ with a pair $(g,h)$, we consider the (oriented, labeled) Schreier graph $\Gamma(G,g,h)$ consisting of the set of vertices $G$, with, for each $x\in G$, an edge from $x$ to $gx$ labeled $g$ and an edge from $x$ to $hx$ labeled $h$. Note that the connected component of $x\in G$ in this graph is the right $H$-cosets $Hx$. In particular, the Schreier graph $\Gamma(H,g,h)$ is a left Cayley graph of $H$.

If $p$ is a path in $\Gamma (G,g,h)$, then by $|p|$ we denote the length of $p$, i.e. the number of constituent edges of $p$. We note that if $w$ is the word obtained by concatenating the labels of consecutive edges in $p$, then $|p| = |w|$, so we use one symbol $|\cdot |$ for both notions, the length of the path and length of the word.

\begin{observation}\label{ab}
Let $G$ be an abelian group. Then $G$ contains no wS-subset.
\end{observation}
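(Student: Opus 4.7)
The plan is to argue by contradiction, exploiting the commutativity to force an incompatible equation between the two removable points. Suppose $E$ is a $(g,h)$-wS-subset with removable points $a\neq b$. Then $a,b\in E$, and because $g,h\in G$ act bijectively on $G$, the images $gb$ and $ha$ both lie in $E$ as well (indeed $gb\in gE\subseteq E$ and $ha\in hE\subseteq E$).

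The heart of the argument is to compute $ghE$ in two ways. On the one hand,
\[
ghE \;=\; g(hE) \;=\; g(E\smallsetminus\{b\}) \;=\; gE\smallsetminus\{gb\} \;=\; (E\smallsetminus\{a\})\smallsetminus\{gb\} \;=\; E\smallsetminus\{a,gb\}.
\]
On the other hand, using $gh=hg$ in the abelian group $G$,
\[
ghE \;=\; hgE \;=\; h(E\smallsetminus\{a\}) \;=\; hE\smallsetminus\{ha\} \;=\; E\smallsetminus\{b,ha\}.
\]
Since $\{a,gb\}$ and $\{b,ha\}$ are both subsets of $E$ and produce the same complement inside $E$, they must coincide.

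The final step extracts the contradiction. The element $a$ belongs to $\{a,gb\}=\{b,ha\}$, so either $a=b$, which is forbidden by hypothesis, or $a=ha$, which in a group means $h=1$. But $h=1$ forces $hE=E$, whereas by assumption $hE=E\smallsetminus\{b\}$ with $b\in E$, giving the required contradiction. There is no serious obstacle here; the only point to be mindful of is justifying the set equality $\{a,gb\}=\{b,ha\}$ (rather than merely the equality of complements in $G$), which is why we first verify that $gb,ha\in E$.
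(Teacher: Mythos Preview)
Your proof is correct and follows essentially the same approach as the paper: compute $ghE=hgE$ in two ways, obtain $E\smallsetminus\{a,gb\}=E\smallsetminus\{b,ha\}$, and derive a contradiction from $\{a,gb\}=\{b,ha\}$. In fact you are slightly more explicit than the paper in justifying why $gb,ha\in E$, which is exactly what is needed to pass from equality of complements in $E$ to equality of the two-element sets themselves.
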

\noindent\textit{Proof. } Let $E$ be a wS-subset.
 Then by definition:
 $$E\smallsetminus\{b, ha\}=h(E\smallsetminus\{a\})=(hg)E = (gh)E = g(E\smallsetminus\{b\}) =  E\smallsetminus\{a, gb\}$$
and hence either $a=b$ or $a=ha$ and $h=1$. We get a contradiction. $\Box$
 \smallskip

\begin{observation}
Let $E$ be a $(g,h)$-wS-subset of $G$. Then, for every $\gamma$ in the (non-unital) subsemigroup generated by $g,h$, $\gamma E$ is properly contained in $E$. In particular, $g,h$ are not torsion.
\end{observation}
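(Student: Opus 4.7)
The plan is to argue by induction on the length of a word representing $\gamma$ as a product of $g$'s and $h$'s. Every element of the non-unital subsemigroup generated by $g,h$ can be written as $\gamma = s_1 s_2 \cdots s_n$ with $n \geq 1$ and each $s_i \in \{g,h\}$. The base case $n=1$ is exactly the defining property: $gE = E\smallsetminus\{a\}$ and $hE = E\smallsetminus\{b\}$ are both properly contained in $E$ since $a,b \in E$.

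For the inductive step with $n \geq 2$, write $\gamma = s\gamma'$ where $s \in \{g,h\}$ and $\gamma'$ has length $n-1$. The induction hypothesis gives $\gamma' E \subseteq E$ (proper containment in particular implies containment), and then
\[
\gamma E \;=\; s(\gamma' E) \;\subseteq\; sE \;\subsetneq\; E,
\]
which yields the desired proper containment. I don't anticipate any real obstacle here; the argument is a one-line induction, whose only content is that the semigroup action can only shrink $E$ once it lands inside $E$.

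For the final sentence, suppose toward contradiction that $g$ is torsion, so $g^k = 1$ for some $k \geq 1$. If $k=1$ then $g$ is the identity and $gE = E$, contradicting $gE = E\smallsetminus\{a\}$ with $a \in E$. If $k \geq 2$, then $g^k$ lies in the non-unital subsemigroup generated by $g,h$, so by the main claim $g^k E \subsetneq E$; but $g^k = 1$ forces $g^k E = E$, again a contradiction. The same argument rules out torsion for $h$.
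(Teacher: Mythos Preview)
Your proof is correct and follows essentially the same approach as the paper: both argue by induction on word length for the first assertion and then derive the non-torsion conclusion by applying the first part to $g^k$ (and $h^k$). You have simply written out in detail what the paper compresses into the phrase ``follows by induction.''
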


 \noindent\textit{Proof. }
Since $gE$ and $hE$ are properly contained in $E$, the first assertion is follows by induction. The second follows, applied when $\gamma$ is a positive power of $g$ or $h$. Assume $g^k = 1$ for some $k\in \mathbb{N}$. Then $g^kE=1\cdot E = E$, and that contradicts the first statement.
$\Box$
\smallskip

\section{Non-free groups with wS-subsets}

In this section we investigate subgroups  of  groups containing a $(g,h)$-wS-subset, generated by elements $g$ and $h$. As shown in the following example, the subgroup $\langle g,h\rangle$ need not be free.

\begin{example}\label{examp}Suppose $g$ is an element of infinite order and $s$ is an element of order $k\ge 2$.  Let $G=\langle g\rangle\ast\langle s\rangle$ be the free product and write $h=sg$. Define $E\subseteq G$ as the set of all elements represented by reduced words in  $g$ and $s$  ending with a power of $g$. Then
$gE=E\smallsetminus\{g\}$ and $hE=sgE=E\smallsetminus\{sg\}$ and $g^{-1}h$ has finite order.
\end{example}
\smallskip

We note that this fact was independently observed by Tomkowicz (private communication). We continue our investigations  with the assumption that $(g,h)$ is not free (by this we mean that the the subgroup generated by $g,h$ is not free over the pair $(g,h)$).

\begin{lemma}\label{Hhastoo}
If $G$ contains a $(g,h)$-wS-subset and $(g,h)$ is not free, then the subgroup $H=\langle g,h\rangle$ also has a $(g,h)$-wS-subset.
\end{lemma}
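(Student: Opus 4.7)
My plan is to construct a wS-subset of $H$ directly from $E$ using two simple operations: right-translation $E\mapsto Ec^{-1}$ (which commutes with left multiplication by $g,h$ and hence yields another $(g,h)$-wS-subset of $G$, with removable points $ac^{-1}, bc^{-1}$) and intersection with the coset $H\subseteq G$ (which is preserved setwise by left multiplication by $g,h$).

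First I translate so that one removable point is the identity: set $E' := Ea^{-1}$ and $A := E'\cap H$. Using the left-invariance of $H$ one computes $gA = A\setminus\{1\}$ and $hA = A\setminus(\{ba^{-1}\}\cap H)$. This splits naturally into two cases according to whether $a$ and $b$ lie in the same right $H$-coset, i.e.\ whether $ba^{-1}\in H$.

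Case A ($ba^{-1}\in H$): Then $hA = A\setminus\{ba^{-1}\}$, and $A$ is immediately a $(g,h)$-wS-subset of $H$ with distinct removable points $1$ and $ba^{-1}$ (distinct because $a\neq b$).

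Case B ($ba^{-1}\notin H$): Now $hA=A$, so $A$ alone fails. I symmetrically form $B := Eb^{-1}\cap H$, which satisfies $gB = B$ and $hB = B\setminus\{1\}$, and consider the combined set $C := A\cap Bg$. The verification rests on three elementary facts: $g\in A$ (since $1\in A$ and $gA\subseteq A$), $g^{-1}\in B$ (since $1\in B$ and $gB=B$), and $g\neq 1$ (since $g$ is non-torsion, by the earlier observation that the elements of a wS-pair are never torsion). A short computation using $gB=B$ and $hA=A$ then yields $gC = C\setminus\{1\}$ and $hC = C\setminus\{g\}$, showing that $C$ is a $(g,h)$-wS-subset of $H$ with distinct removable points $1$ and $g$.

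The main obstacle is Case B: any symmetric combination of $A$ and $B$ (union, intersection, or symmetric difference) ends up having both $g$ and $h$ remove the same point $1$, which violates the requirement $a\neq b$. The remedy is the right-translation of $B$ by $g$ before intersecting; this shifts the $h$-removable point of $B$ from $1$ to $g$, while the required memberships ($1\in Bg$ via $g^{-1}\in B$, and $g\in Bg$ via $1\in B$) continue to hold, producing two genuinely distinct removable points. I note that non-freeness of $(g,h)$ does not appear to enter the argument; it is presumably stated because the free case is already classical.
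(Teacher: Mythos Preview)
Your proof is correct. Case A coincides with the paper's argument, but in Case B the two diverge: the paper argues geometrically that if $ba^{-1}\notin H$ then $A=E'\cap H$ has a single boundary edge (labeled $g$) in the left Cayley graph of $H$, and by vertex-transitivity this forces every $g$-edge (and, symmetrically, every $h$-edge) to lie in no cycle, so the Cayley graph is a tree and $(g,h)$ is free, contradicting the hypothesis. You instead bypass the contradiction entirely and build a wS-subset directly via the twisted intersection $C=A\cap Bg$. Your route is more elementary and self-contained (no Schreier-graph machinery) and, as you observe, does not actually use the non-freeness hypothesis, so it proves a cleaner statement: \emph{whenever} $G$ has a $(g,h)$-wS-subset, so does $\langle g,h\rangle$. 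The paper's approach, on the other hand, extracts structural information (Case B forces freeness) that foreshadows the Cayley-graph analysis in the proof of the Main Theorem; it also has the slight advantage that in the non-free case one always ends up with the simple intersection $A$, whereas your Case B subset $C$ is a bit more intricate.
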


\noindent\textit{Proof. }
Let $E$ be a $(g,h)$-wS-subset. For at most one right $H$-coset $Hu$, the intersection  $Hu\cap E$ is
not left $g$-invariant, and at most one, say $Hv$, the intersection $Hv\cap
E$ is not $h$-invariant. We may assume that $u=1$ (right translating by
$u^{-1}$). Consider the Schreier graph $\Gamma(G,g,h)$. For subset $G'\subset G$, the boundary of subgraph $\Gamma'(G',g,h)$ is the set of all edges in $\Gamma(G,g,h)$ connecting vertices in $G'$ with vertices in $G\setminus G'$ or vice versa. If  $v\notin H$, then $E\cap H$ is a subset whose boundary is a
single edge labeled by $g$. This edge cannot hence be contained in any cycle. The Schreier graph $\Gamma(G,g,h)$ is a transitive graph under the right action of $G$.
It follows that no edge labeled by $g$ belongs to a loop. Similarly (working in the other coset) edges labeled by $h$
cannot be contained in any loop. Hence the component $H$ of the Schreier graph is a tree (with our definition of Schreier graph, which allowed a priori double edges or self-loops). This means that $H$ is freely generated by $(g,h)$. $\Box$

\bigskip

\noindent\textbf{Proof of Main Theorem. }
We assume that $H$ is not freely generated by $(g,h)$, and hence by Lemma \ref{Hhastoo}, we can suppose that $E\subseteq H$.

In the left Cayley graph $\Gamma(H,g,h)$, call critical edges the two boundary edges $c_g$
and $c_h$ of $E$ (one is labeled by $g$ and one by $h$). Consider critical
loops, that is, geodesic loops passing through one (and hence
both) critical edge (see Figure 1).

\begin{figure}\label{fi1}

\setlength{\unitlength}{0.6mm}
\begin{picture}(200,100)(-100,0)
\put(-50,0){\line(0,1){80}}
\put(-50,0){\line(1,0){100}}
\put(50,80){\line(0,-1){80}}
\put(50,80){\line(-1,0){100}}
\put(-45,70){$E$}

\put(70,15){\vector(-1,0){40}}
\put(70,65){\vector(-1,0){40}}
\put(53,67){$c_h$}
\put(53,18){$c_g$}
\put(79,40){$\beta_0$}
\put(24,40){$\alpha_0$}

\put(70,15){\circle*{0.5}}
\put(30,15){\circle*{0.5}}
\put(70,65){\circle*{0.5}}
\put(30,65){\circle*{0.5}}
\put(68,11){${g^{-1}a}$}
\put(27,11){${a}$}
\put(70,66){${h^{-1}b}$}
\put(30,66){${b}$}

\linethickness{0.05mm}
	\qbezier(70, 15)(85, 40)(70, 65)
	\qbezier(30, 15)(15, 40)(30, 65)
	\put(77.5,42){\vector(0,-1){4}}
	\put(22.5,38){\vector(0,1){4}}
		
\end{picture}
\caption{A critical loop (see the proof of the Main Theorem)
}
\end{figure}
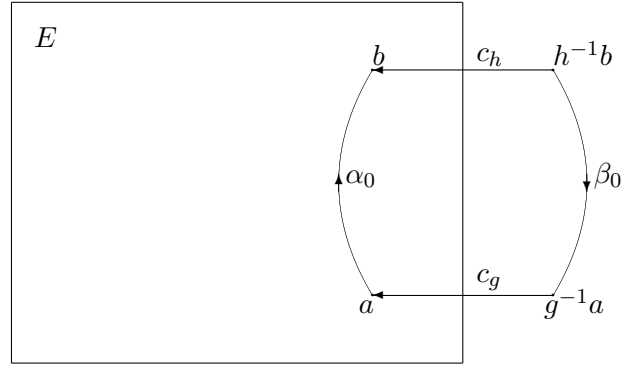

Let us start by the observation that $(E,g,h,a,b)$ play the almost same role as $(E^c,g^{-1},h^{-1},g^{-1}a,h^{-1}b)$, so $E$ and its complement $E^c$ play almost symmetric roles; the subsets $E$ and $E^c$ are joined exactly by two edges $(g^{-1}a,a)$ and $(h^{-1}b,b)$, labeled by $g$ and $h$. The ``almost" is due to the fact that one might have $g^{-1}a=h^{-1}b$, which has to be taken care of.

Let $n_0\ge 1$ be the smallest size of a nontrivial loop in $H$ (it exists by non-freeness). Translating, we can suppose that this loop is critical: write it as
$u_0=\beta_0\bar{c}_h\alpha_0c_g$ (see Figure 1).

We can suppose that $|\alpha_0|\ge |\beta_0|$. Indeed, otherwise, $|\beta_0|>|\alpha_0|$ and in particular $|\beta_0|\ge 1$; being geodesic implies $g^{-1}a\neq h^{-1}b$, and in this case we can replace $E$ with $E^{c}$ as indicated above.

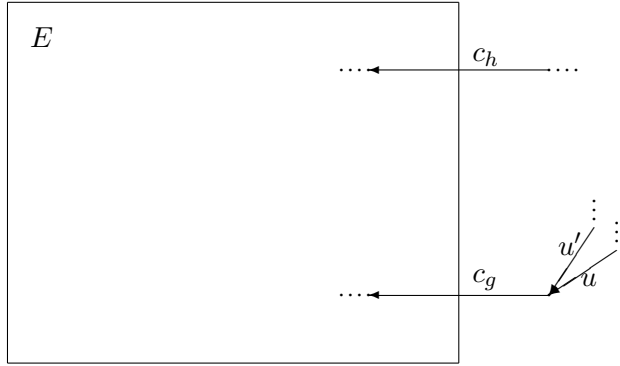
\begin{figure}\label{fi2}

\setlength{\unitlength}{0.6mm}
\begin{picture}(200,100)(-100,0)
\put(-50,0){\line(0,1){80}}
\put(-50,0){\line(1,0){100}}
\put(50,80){\line(0,-1){80}}
\put(50,80){\line(-1,0){100}}
\put(-45,70){$E$}

\put(70,15){\vector(-1,0){40}}
\put(70,65){\vector(-1,0){40}}
\put(53,67){$c_h$}
\put(53,18){$c_g$}
\put(85,25){\vector(-3,-2){15}}
\put(77,17){$u$}
\put(80,30){\vector(-2,-3){10}}
\put(72,24){$u'$}

\put(70,15){\circle*{0.5}}
\put(30,15){\circle*{0.5}}
\put(70,65){\circle*{0.5}}
\put(30,65){\circle*{0.5}}

\linethickness{0.05mm}

\put(28,15){\circle*{0.2}}
\put(26,15){\circle*{0.2}}
\put(24,15){\circle*{0.2}}

\put(85,27){\circle*{0.2}}
\put(85,29){\circle*{0.2}}
\put(85,31){\circle*{0.2}}

\put(80,32){\circle*{0.2}}
\put(80,34){\circle*{0.2}}
\put(80,36){\circle*{0.2}}
		
\put(72,65){\circle*{0.2}}
\put(74,65){\circle*{0.2}}
\put(76,65){\circle*{0.2}}

\put(28,65){\circle*{0.2}}
\put(26,65){\circle*{0.2}}
\put(24,65){\circle*{0.2}}

\end{picture}
\caption{Critical edges and forking.}
\end{figure}

This reduction being made, we have the following:

\begin{lemma}\label{critical_b0}
Every critical loop meets $E^c$ in exactly the path $\beta_0$.
\end{lemma}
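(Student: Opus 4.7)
The plan is to exploit the minimality of $n_0$ via a splicing argument, supplemented by the geodesic property of critical loops.

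Let $u$ be an arbitrary critical loop. Since $c_g$ and $c_h$ are the only edges joining $E$ to $E^c$ in $\Gamma(H,g,h)$, and $u$ is a geodesic (hence reduced) loop, it must cross each exactly once, decomposing as $u = \beta \bar{c}_h \alpha c_g$ with $\alpha$ a path in $E$ from $a$ to $b$ and $\beta$ a path in $E^c$ from $h^{-1}b$ to $g^{-1}a$. The claim then reduces to showing $\beta = \beta_0$.

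First I would splice to obtain length bounds: form the closed walks $u' = \beta_0 \bar{c}_h \alpha c_g$ and $u'' = \beta \bar{c}_h \alpha_0 c_g$, each obtained from $u_0$ by substituting one component. At each of the four splice junctions I would check that no free cancellation occurs, since a cancellation would force an interior vertex of an $E$-side path ($\alpha$ or $\alpha_0$) to lie in $E^c$, or of an $E^c$-side path ($\beta$ or $\beta_0$) to lie in $E$, both impossible. Hence $u'$ and $u''$ are reduced, hence nontrivial, loops, and minimality of $n_0$ gives $|\alpha| \geq |\alpha_0|$ and $|\beta| \geq |\beta_0|$.

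Next I would apply the geodesic-loop property of $u$ to the pair of vertices $(h^{-1}b, g^{-1}a)$. The two arcs of $u$ between them have lengths $|\beta|$ and $|\alpha|+2$, so $d(h^{-1}b, g^{-1}a) = \min(|\beta|, |\alpha|+2)$. Since $\beta_0$ witnesses $d(h^{-1}b, g^{-1}a) \leq |\beta_0|$ and $|\alpha| + 2 \geq |\alpha_0| + 2 \geq |\beta_0| + 2$ (using $|\alpha_0| \geq |\beta_0|$), the minimum must be realized by $|\beta|$, giving $|\beta| \leq |\beta_0|$. Combined with the previous step, $|\beta| = |\beta_0|$.

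To finish, I would suppose $\beta \neq \beta_0$ and consider the reduced form of the closed walk $\bar{\beta}_0 \beta$. Since $\beta_0$ and $\beta$ are distinct reduced paths of common length with the same endpoints, this freely reduces to a nontrivial walk of length $2|\beta_0| - 2k$, where $k < |\beta_0|$ is the length of their longest common prefix. Minimality of $n_0$ then forces $2|\beta_0| - 2k \geq n_0 = |\alpha_0| + |\beta_0| + 2 \geq 2|\beta_0| + 2$, a contradiction. The main delicacy I anticipate is the junction-cancellation check in the splice step, which requires clean bookkeeping using that $c_g$ and $c_h$ form the entire boundary of $E$; once that is secured, the remaining arguments are routine length comparisons.
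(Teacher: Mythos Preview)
Your argument is correct and reaches the same conclusion, but it is organised differently from the paper's proof.

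The paper proceeds by a dichotomy on whether the arc $\beta$ is geodesic. If it is, then since $\beta_0$ is also geodesic (being the shorter arc of the minimal loop $u_0$, as $|\beta_0|\le|\alpha_0|$), one has $|\beta|=|\beta_0|$ immediately; the closed walk $\bar\beta\beta_0$ then has length $2|\beta_0|\le n_0-2<n_0$ and is therefore null-homotopic, whence $\beta=\beta_0$. If $\beta$ is not geodesic, the paper shows that the complementary arc $\bar c_h\alpha c_g$ is geodesic, deduces $|\alpha|=|\alpha_0|$ (via one splice with $\beta_0$), and obtains a geodesic subpath of length $\ge n_0/2+1$ inside the length-$n_0$ loop $u_0$, a contradiction.

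Your route replaces this case split by the two-sided splice, obtaining $|\alpha|\ge|\alpha_0|$ and $|\beta|\ge|\beta_0|$ uniformly, and then squeezes $|\beta|=|\beta_0|$ via the geodesic-loop property of $u$. This is a clean alternative and avoids the somewhat ad hoc Case~2. Two minor remarks on execution: first, the junction-cancellation check you anticipate as delicate is in fact automatic, since each of the four junctions of $u'$ (resp.\ $u''$) already occurs in one of the reduced loops $u$ or $u_0$; second, in your final step the cyclic reduction of $\bar\beta_0\beta$ cancels both the longest common prefix \emph{and} the longest common suffix of $\beta,\beta_0$, not just the prefix. This does not damage the contradiction (the reduced length is still at most $2|\beta_0|<n_0$), and indeed the simpler observation that the unreduced closed walk already has length $2|\beta_0|<n_0$, hence is null-homotopic, hence $\beta=\beta_0$ as both are reduced paths, is exactly how the paper concludes.
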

\begin{proof}
Consider a critical loop
$u=\beta\bar{c}_h\alpha c_g$, say of length $n\ge n_0$; we need to prove $\beta=\beta_0$.

If $\beta$ is geodesic, then $|\beta|=|\beta_0|$, so the loop $\bar{\beta}\beta_0$ has length $<n_0$, and we deduce $\beta=\beta_0$.

Suppose now that $\beta$ is not geodesic, so that $|\beta|>n/2$. So $\bar{c}_h\alpha c_g$ (and hence $\alpha$) is geodesic, and in particular $|\alpha|\le |\alpha_0|$. If $|\alpha|<|\alpha_0|$ then the loop $\beta_0\bar{c}_h\alpha c_g$ has length $<n_0$ and this contradicts the minimality of $n_0$, so $|\alpha|=|\alpha_0|$. This implies that $\alpha_0$ is geodesic. In
turn, $\bar{c}_h\alpha_0c_g$ is geodesic as well (otherwise we would
contradict that $\bar{c}_h\alpha c_g$ is geodesic). This is a contradiction, since this is a geodesic path of length $\ge n_0/2+1$ in a loop of size $n_0$ (whose largest geodesic path has length $\le n_0/2$).
\end{proof}

\begin{lemma}\label{word_lemma}
Say that a cyclically reduced word $w'$ is bad if it is not (up to cyclic conjugation or inversion of the form) $g^n$, $h^n$, $(gh)^n$, or $(h^{-1}g)^n$ for any $n\ge 0$. For every bad $w'$ either there exists two directed edges labeled $g$ such that the previous labels (the label of the edge preceding the given edges, whose labels have to be $g$, $h$ or $h^{-1}$) differ, or there exists two directed edges labeled $h$ such that the previous labels ($h$, $g$ or $g^{-1}$) differ.
\end{lemma}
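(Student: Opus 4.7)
The plan is to prove the contrapositive: assume that in the cyclic word $w'$ every occurrence of $g^{\pm 1}$ is preceded by a common letter $P$, and every occurrence of $h^{\pm 1}$ is preceded by a common letter $Q$ (with $P$ and $Q$ understood as defined only when the corresponding letter class actually appears), and deduce that $w'$ is good. I read ``directed edge labeled $g$'' as a traversal in either direction of a $g$-edge of the Cayley graph, so both $g$ and $g^{-1}$ letters count as $g$-edges; otherwise a short cyclically reduced word such as $g^{-1}g^{-1}hg^{-1}h$ would already violate the stated conclusion.

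First, I would dispose of the degenerate cases: if no $g^{\pm 1}$ occurs then cyclic reducedness forces $w' = h^{\pm n}$, and symmetrically if no $h^{\pm 1}$ occurs, so both are good. Assuming from now on that both sign-classes are present, the key preliminary step is to pin down $P$ and $Q$ using reducedness. If both $g$ and $g^{-1}$ occur in $w'$, then $P \ne g^{-1}$ (otherwise $P g$ is not reduced) and $P \ne g$ (otherwise $P g^{-1}$ is not reduced), so $P \in \{h, h^{-1}\}$. Otherwise only one sign of $g$ appears, and $P$ is either that sign or lies in $\{h, h^{-1}\}$; moreover $P$ must itself be a letter appearing in $w'$. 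A symmetric dichotomy constrains $Q$.

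The main work is then a case split by which signs of $g$ and $h$ actually occur. In the ``mixed-sign'' configurations (both $g, g^{-1}$ or both $h, h^{-1}$ appearing), I expect every possible value of $(P,Q)$ to lead to a contradiction. For instance, if both $g$ and $g^{-1}$ appear and $P = h$, then the letter $\ell$ immediately following $g^{-1}$ has predecessor $g^{-1}$; but the hypothesis forces its predecessor to be $P = h$ (if $\ell$ is $g^{\pm 1}$) or $Q \in \{g, g^{-1}\}$ (if $\ell$ is $h^{\pm 1}$), neither of which is $g^{-1}$, and all four $(P,Q)$ combinations are excluded the same way. So the mixed-sign cases are vacuous. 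That leaves the ``single-sign'' case, in which $w'$ uses exactly one sign of $g$ and one sign of $h$; WLOG the signs are $g$ and $h$ (the three other sign-choices produce the inverses or cyclic conjugates of $(gh)^n$ and $(h^{-1}g)^n$, all good). Here $P, Q \in \{g, h\}$, and of the four combinations only $P = h$, $Q = g$ is consistent with both letters appearing; it forces strict alternation and hence $w' = (gh)^n$ up to cyclic conjugation.

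The only real obstacle is the bookkeeping of sign-subcases: each individual sub-case is a one-line check of which letter can legally follow $g^{\pm 1}$ or $h^{\pm 1}$ under the hypothesis, but there are several of them, and tracking how cyclic conjugation and inversion identify the four variants $(gh)^{\pm n}$ and $(h^{-1}g)^{\pm n}$ requires some care.
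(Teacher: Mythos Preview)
Your contrapositive plan is sound and, since the paper's own proof is literally ``details are omitted'' followed by a two-sentence sketch, you are supplying what the paper leaves out. The organization differs: the paper applies the $g$-condition alone first to reach the intermediate normal form $g^n$, $h^n$, or $gh^{n_1}\cdots gh^{n_k}$ (up to inversion and cyclic conjugation, with all $n_i$ of one sign), and only then invokes the $h$-condition to force $|n_i|=1$. Your route instead splits by which signs of $g$ and $h$ actually occur and uses both hypotheses simultaneously. Either bookkeeping works; your version has the virtue of making explicit the role of inversion, which the paper's parenthetical (restricting predecessors of a $g$-edge to $g,h,h^{-1}$) obscures. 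Your counterexample $g^{-2}hg^{-1}h$ correctly shows that the ``letter $g$ only'' reading of the statement is untenable, and your broader reading (both $g$ and $g^{-1}$ count as $g$-edges) is the one that makes the lemma true and is consistent with the paper's own sketch.

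One concrete slip in your illustrative case: with both $g$ and $g^{-1}$ present and $P=h$, examining only the letter $\ell$ after some $g^{-1}$ does \emph{not} yet give a contradiction. If $\ell\in\{h,h^{-1}\}$ then its predecessor is $g^{-1}$, which simply forces $Q=g^{-1}$; your assertion ``neither of which is $g^{-1}$'' is unjustified (and $Q\in\{g,g^{-1}\}$ is not known a priori unless both signs of $h$ appear). The missing step is to also look at the letter following an occurrence of $g$ (which exists by hypothesis): the same dichotomy then forces $Q=g$, and $Q=g=g^{-1}$ is the contradiction. With this extra line each mixed-sign subcase does collapse, and your single-sign analysis (only $P=h$, $Q=g$ survives, giving strict alternation) is correct as written.
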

\begin{proof}
Details are omitted. If there exists no two directed edges labeled $g$ such that the next labels differ, one readily sees that $w'$ has the form (up to inversion and cyclic conjugation) $g^n$, $h^n$, or $gh^{n_1}\dots gh^{n_k}$ for some $k\ge 1$ and $n_1,\dots,n_k$, all positive or all negative. If furthermore the symmetric condition holds for $h$, we deduce that $w'$ is not bad (i.e., in the third case, one obtains that all $n_i$ are equal to $1$ or $-1$.
\end{proof}

Now consider a critical loop $u$. If it is bad, by Lemma \ref{word_lemma} it has a right translate $u'$ which forks before $c_g$ or $c_h$ (see Figure 2). Then $u'$ is also critical and we contradict Lemma \ref{critical_b0}. (In the case $g^{-1}a=h^{-1}b$ this is also a contradiction since then the loop cannot return into $E$.)

Hence every critical loop is not bad. Since every critical loop, up to inversion, both contains $g$ and $h^{-1}$, the only possibility is that the boundary label is $(h^{-1}g)^n$ for some $n\ge 1$. Since by assumption $(h^{-1}g)^{n_0}=1$ and is the smallest loop, this only holds for $n=n_0$.

Since every geodesic loop has a critical
right-translate, we conclude that there is a unique (up to right-translation
and choice of orientation) geodesic loop of positive length in the left Cayley
graph, namely $u_0$.

It follows that $G=\langle g,h\mid (h^{-1}g)^{n_0}\rangle$.  $\Box$
\smallskip

In addition, we can fully describe the  $wS$-subsets in the group
$G_k=\langle g,h\mid (h^{-1}g)^k=1\rangle$, $k\ge 2$:

\begin{proposition}\label{descri}
In $G_k$, there are exactly $k$ subsets $E$ such that
$gE=E\smallsetminus\{1\}$ and $hE$ is $E$ minus a singleton; for $k-1$ of
them this yields a wS-subset. More precisely, in the Schreier graph, these
are the subsets $E_\ell$ defined by cutting along the edge $(g^{-1},1)$
and the edge $(g^{-1}(hg^{-1})^{\ell-1} ,(hg^{-1})^\ell)$ for some
$1\le\ell\le k$. We have $hE_\ell=E_\ell\smallsetminus \{b_\ell\}$ with
$b_\ell=(hg^{-1})^{\ell}$, which for $\ell=k$ equals 1 and otherwise is
not 1 (so we have a wS-subset).

In particular the right action of $G_k$ on the set of $(g,h)$-wS-subsets is free and has exactly $k-1$ orbits.
\end{proposition}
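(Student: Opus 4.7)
The plan is to work in the left Cayley graph $\Gamma = \Gamma(G_k, g, h)$, exploiting the free-product decomposition $G_k = \langle g \rangle \ast \langle s \rangle$ with $s = h^{-1}g$ of order $k$. As in the proof of the Main Theorem, the hypotheses $gE = E \setminus \{1\}$ and $hE = E \setminus \{b\}$ force the only $g$- and $h$-edges of $\Gamma$ crossing between $E$ and $E^c$ to be $(g^{-1}, 1)$ and $(h^{-1}b, b)$ respectively. A direct counting argument (removing two edges from a connected graph increases the number of components by at most two, and each of our two removed edges already separates a point of $E$ from a point of $E^c$) then shows that $\{E, E^c\}$ is precisely the pair of connected components of $\Gamma$ with those two edges removed.

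The heart of the proof is to identify which pairs of edges—one labeled $g$ and one labeled $h$—form such a 2-edge cut. I would argue this occurs precisely when both edges lie on a common translate of the basic relator $2k$-cycle $C_0$ coming from $(h^{-1}g)^k = 1$. Using the free-product normal form one checks that each $g$- or $h$-edge of $\Gamma$ lies on a unique relator cycle, namely the translate $C_0 \cdot u$ where $u$ is any representative of the right $\langle s \rangle$-coset of its source, and that any two distinct relator cycles share at most a single vertex; consequently these $2k$-cycles are exactly the blocks of $\Gamma$, so a pair of edges forms a 2-edge cut if and only if they lie on a common block. Specializing to our $g$-edge $(g^{-1}, 1)$: the unique relator cycle through it is $C := C_0 \cdot g^{-1}$, with vertex set $\{s^i g^{-1}\} \cup \{g s^i g^{-1}\}$ for $0 \le i < k$. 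Rewriting via $t = hg^{-1}$ and the identity $gs^i g^{-1} = (gh^{-1})^i = t^{-i}$ shows that the $k$ $h$-edges on $C$ are exactly $\bigl(g^{-1}(hg^{-1})^{\ell-1},\, (hg^{-1})^\ell\bigr)$ for $\ell = 1, \dots, k$, giving the $k$ subsets $E_\ell$ of the statement. For each of them $b_\ell = (hg^{-1})^\ell$; this equals $1 = a$ exactly when $\ell = k$, violating $a \neq b$, while $\ell = 1, \dots, k-1$ all yield genuine wS-subsets.

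For the orbit statement, right translation by $y \in G_k$ sends a $(g,h)$-wS-subset $(E, g, h, a, b)$ to $(Ey, g, h, ay, by)$, so the stabilizer of any such $E$ fixes the removable $g$-point $a$ and hence is trivial: the action is free. Since every orbit has a unique representative with $a = 1$ (namely $Ea^{-1}$), the orbits correspond bijectively to the wS-subsets identified in the first part with $a = 1$, that is to $E_1, \dots, E_{k-1}$; this gives the asserted $k - 1$ orbits. The main obstacle is the block-decomposition claim in the second paragraph—each edge lies on a unique relator $2k$-cycle, and two distinct such cycles share at most one vertex—but once this is set up via the normal form in the free product $\langle g\rangle \ast \langle s\rangle$, the rest reduces to direct enumeration.
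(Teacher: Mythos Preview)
Your proposal is correct and follows essentially the same route as the paper: both rest on the fact that the left Cayley graph of $G_k$ is a ``tree of $2k$-gons,'' so that the only $h$-edges forming a $2$-edge cut together with $(g^{-1},1)$ are the $k$ $h$-edges on the same $2k$-cycle; the paper simply asserts this structure by inspection, whereas you spell it out as a block decomposition via the normal form in $\langle g\rangle\ast\langle s\rangle$. One minor point: your first-paragraph ``counting argument'' does not by itself force exactly two components (removing two edges can yield three), but this is immediately resolved once you establish the block structure in the next paragraph, so the logic is sound even if the presentation is slightly out of order.
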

\smallskip

\noindent\textit{Proof. }
This is clear by drawing the left Cayley graph of $G_k$, which is a ``tree-like" union
of $2k$-gons (see Figure 3). After cutting the edge $(g^{-1},1)$, the only other edges
that cut into 2 components after removal, are the $2k-1$ remaining ones in the same hexagon, $k$ of which being labeled by $h$, and are as described in the statement.

In the last assertion, the freeness follows since each $(g,h)$-wS-subset $E$ has a unique boundary edge labeled by $g$ and the action on the set of edges is free. The statement about the number of orbits follows from the first assertion, since each wS-subset $(g,h)$-wS-subset $E$ has a unique right translate $E'$ such that $E'\smallsetminus gE'=\{1\}$. $\Box$
\smallskip


\begin{figure}\label{fi3}
\centering\includegraphics{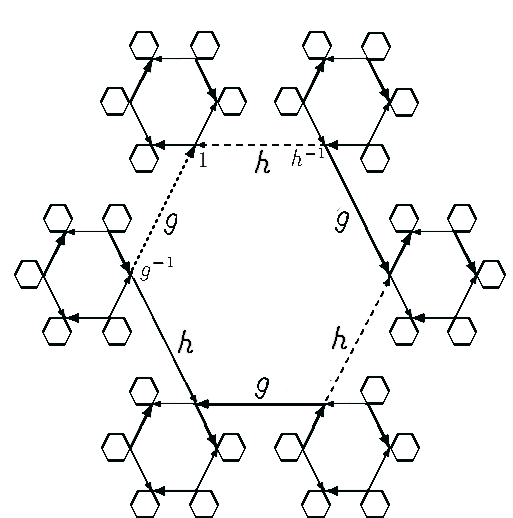}
\caption{The Cayley graph of $G_3$ -- a net of hexagons}
\end{figure}

\section{Proof of Corollary \ref{cor} using Stallings' theorem}\label{shorthm}

We now give the proof of Corollary \ref{cor}, using the concept of ends of groups. In fact we prove \ref{cor} in a more general setting, allowing singletons $\{a\},\{b\}$ to be replaced by disjoint nonempty finite subsets. We use again Lemma \ref{Hhastoo}, allowing to assume that $G$ is generated by $\{g,h\}$.

A group $G$ is said to have $\ge 2$ ends if there exists a subset $E$ of $G$ that is $G$-commensurated, in the sense that $gE\bigtriangleup E$  is finite for every $g\in E$, and
such that both $E$ and $G\smallsetminus E$ are infinite.

\begin{proposition}
Consider a group $G$ with two elements $g,h$, a subset $E$ and disjoint nonempty finite subsets $A,B$ such that $gE=E\smallsetminus A$ and $hE=E\smallsetminus B$. Then $G$ contains a nonabelian free subgroup.
\end{proposition}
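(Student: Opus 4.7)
The plan is to use Lemma~\ref{Hhastoo} (whose proof adapts directly when $A,B$ are finite, since the Schreier-graph argument only needs that the boundary of $E\cap H$ is finite) to reduce to the case $G=\langle g,h\rangle$, and then to show that $G$ has infinitely many ends, so that a standard consequence of Stallings' theorem produces a copy of $F_2$ inside $G$.

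Once $G=\langle g,h\rangle$, the fact that $G$ commensurates $E$ is immediate (the commensurator of $E$ is a subgroup of $G$ containing $g$ and $h$), and both $E$ and $E^c$ are infinite: if $E$ were finite, the bijectivity of left multiplication by $g$ would contradict $|gE|=|E|-|A|$ with $|A|\ge 1$; if $E^c$ were finite, $gE^c=E^c\sqcup A$ would give $|gE^c|>|E^c|$. Hence by the definition given above, $G$ has at least two ends.

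The heart of the argument is to rule out that $G$ has \emph{exactly} two ends, i.e., is virtually cyclic. Suppose it were. Since $g$ and $h$ preserve the end determined by $E$, they lie in the orientation-preserving index-at-most-$2$ subgroup; as $G=\langle g,h\rangle$, this subgroup is $G$ itself, so $G$ admits a surjective homomorphism $\phi:G\to\mathbb{Z}$ with finite kernel. The ``flow'' $\mu(\gamma):=|E\setminus\gamma E|-|\gamma E\setminus E|$ is a homomorphism $G\to\mathbb{Z}$, necessarily a scalar multiple of $\phi$, and the identities $\mu(g)=|A|>0$, $\mu(h)=|B|>0$ force $\phi(g),\phi(h)>0$ after possibly replacing $\phi$ by $-\phi$. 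In a $2$-ended group, any commensurated subset with infinite complement is commensurate to $\phi^{-1}(\mathbb{N})$, so $\phi(E)$ is bounded below; set $K:=\min\phi(E)$. For any $x\in E$ with $\phi(x)=K$ one has $\phi(g^{-1}x)=K-\phi(g)<K$, which forces $g^{-1}x\notin E$ and hence $x\in E\setminus gE=A$; the identical argument with $h$ gives $x\in B$, contradicting $A\cap B=\emptyset$.

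Therefore $G$ has infinitely many ends, and by Stallings' theorem $G$ splits non-trivially over a finite subgroup; a standard ping-pong argument on the associated Bass--Serre tree, applied to two hyperbolic elements of $G$ with disjoint axes, then produces the desired copy of $F_2$. The main technical obstacle is the $\phi$-minimum argument in the virtually cyclic case; the remaining steps---the reduction to $G=\langle g,h\rangle$, the commensuration and infiniteness of $E,E^c$, and the extraction of $F_2$ from the Bass--Serre tree---are essentially routine.
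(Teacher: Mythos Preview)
Your proof follows the same strategy as the paper's: show that $H=\langle g,h\rangle$ has at least two ends, rule out the two-ended case via the ``flow'' homomorphism and a minimum argument, and then invoke Stallings. The core of the argument (the two-ended contradiction) is essentially identical to the paper's.

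The one genuine weak point is the reduction step. Lemma~\ref{Hhastoo}'s Schreier-graph argument does not adapt as you claim: it relies on the boundary of $E\cap Hu$ (in the offending coset) consisting of a \emph{single} edge of a given label, which is what forces the Cayley graph to be a tree; with $A,B$ merely finite, the dichotomy ``either the full setup descends to $H$, or $H$ is free'' no longer holds. Concretely, after translating so that $1\in A$ and setting $E'=E\cap H$, $A'=A\cap H$, $B'=B\cap H$, one gets $gE'=E'\smallsetminus A'$ and $hE'=E'\smallsetminus B'$ with $A'\neq\emptyset$, but there is no reason $B'$ should be nonempty---and then your assertion $\mu(h)=|B|>0$ fails in the reduced setup. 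The paper sidesteps this by \emph{not} fully reducing: it uses $E'=E\cap H$ only to see that $H$ has $\ge 2$ ends, but defines the flow $f$ via the \emph{original} $E\subset G$ (restricted to $\gamma\in H$), so that $f(g)=|A|>0$ and $f(h)=|B|>0$ hold automatically; the minimum argument is then carried out in $E'$, and one lands in $A'\cap B'\subset A\cap B=\emptyset$.

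One minor slip: a commensurated infinite-coinfinite subset of a two-ended group is commensurate to $\phi^{-1}(\mathbb{N})$ \emph{or to its complement}; you need an extra observation (e.g.\ $gE\subset E$) to pin down which, and hence to conclude that $\phi(E)$ is bounded below.
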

\begin{proof}
Denote by $H$ the subgroup generated by $\{g,h\}$. Multiplying on the right, we can suppose that $1\in A$. Then $E'=E\cap H$ is $H$-commensurated; moreover, it is both infinite and has infinite complement in $H$: indeed, for $A'=A\cap H=E'\smallsetminus gA'$, the $g^nA'$ for $n\ge 0$ are pairwise disjoint, nonempty subsets of $E$, and the $g^{-n}A'$, for $n\ge 1$ are pairwise disjoint, nonempty subsets of $H\smallsetminus E$. Hence, $H$ has at least 2 ends.

If $H$ has $\ge 3$ ends, Stallings' theorem \cite{Stall} implies that $H$ has a free subgroup.

It remains to suppose that $H$ has 2 ends. For $\gamma\in G$, write $f(\gamma)=|E\smallsetminus\gamma E|-|\gamma E\smallsetminus E|$. By an elementary verification, $f$ is a group homomorphism. Then $f(g)>0$, so $f$ is nonzero and hence, since $H$ is 2-ended, its kernel is finite. Then, up to finite symmetric difference, the only $H$-commensurated subsets of $H$ that are infinite with infinite complement are $f^{-1}(\mathbf{N})$ and its complement. Since $f(E')\subset E'$, it follows that $E'$ has finite symmetric difference with $f^{-1}(\mathbb{N})$. Let $x$ be an element of $E'$ with $f(x)$ minimal (it thus exists). Since $f(g)>0$ and $f(h)>0$, we see that $x\notin gE'$ and $x\notin hE'$. So $x\in A\cap B$, a contradiction.
\end{proof}

It would be interesting to produce an explicit free subgroup in this setting. Also, we do not know if we can reach the same conclusion when $A$ or $B$ is infinite.

\subsection*{Acknowledgements}
We would like to thank the referee for useful reading, which led us to improve the exposition of the proof of the Main Theorem.

We learned about wS-subsets and related problems from Grzegorz Tom\-ko\-wicz, when he gave a talk at the algebraic seminar in Gliwice in November 2017. We are grateful to him for this introduction, some comments concerning our preliminary results and for sharing the text of \cite{MycTom}.
After this work has been accepted for publication, Grzegorz informed us that he and Mycielski obtained the main theorem independently in \cite{MT_graph}.

Agnieszka and Piotr thank Aleksander Ivanov for inspiring discussions and valuable comments on the course of their work.

Y.C. was supported by ANR GAMME (ANR-14-CE25-0004).

\end{document}